\documentclass[11pt, a4paper]{article}

\usepackage[utf8]{inputenc}
\usepackage[T1]{fontenc}
\usepackage{geometry}
\usepackage{amsmath, amssymb, amsthm} 
\usepackage{graphicx} 
\usepackage{hyperref} 
\usepackage{xcolor} 
\usepackage{enumitem} 
\usepackage{booktabs} 
\usepackage{comment}

\usepackage[backend=bibtex, style=numeric, sorting=nyt]{biblatex}
\title{Pseudofiniteness of the Farey Graph}
\author{Connor M. Lockhart \\ \texttt{connorl@umd.edu} \\ \textit{University of Maryland}}
\date{\today}

\newtheorem{theorem}{Theorem}
\newtheorem{lemma}[theorem]{Lemma}
\newtheorem{proposition}[theorem]{Proposition}

\theoremstyle{definition}
\newtheorem{definition}{Definition}

\newtheorem{remark}{Remark}

\begin{document}

\maketitle

\begin{abstract}
We prove that the theory of the Farey graph is pseudofinite by constructing a sequence of finite structures that satisfy increasingly large subsets of its first-order axiomatization.
This graph is an important object in the study of curve graphs, and its model-theoretic properties have been explored in the broader context of curve graphs of surfaces in \cite{disarlo2023modeltheorycurvegraph}.
The theory of the Farey graph was recently axiomatized by Tent and Mohammadi in \cite{TentMohammadi2025}.
We show that while no finite planar graph can satisfy these axioms for sufficiently large substructures, they can be satisfied by triangulations densely embedded on orientable surfaces of higher genus.
By applying a result of Archdeacon, Hartsfield, and Little \cite{Archdeacon1996} on the existence of triangulations with representativity and connectedness, we establish that every finite subset of the theory of the Farey graph has a finite model as desired.
\end{abstract}

\section{Introduction}

The primary objective of this paper is to establish the pseudofiniteness of the Farey graph.
The Farey graph is an important object appearing in various branches of mathematics, in particular number theory and hyperbolic geometry. 
Additionally, the Farey graph can be identified with the curve graph of the punctured torus, which has been studied from a model-theoretic perspective in \cite{disarlo2023modeltheorycurvegraph}.
We utilize the work of Tent and Mohammadi in \cite{TentMohammadi2025} and some results in topological graph theory to explore when finite subsets of this theory can be realized by finite models. Our main result is to show that the theory of the Farey graph is pseudofinite. 

\begin{definition}[Pseudofinite Structure]
    An infinite first-order $\mathcal{L}$-structure $\mathcal{M}$ is \textit{pseudofinite} if for every $\mathcal{L}$-sentence $\phi$, $M\vDash \phi$ implies that there is a finite $\mathcal{L}$-structure $\mathcal{M}_0$ such that $\mathcal{M}_0\vDash \phi$.   
\end{definition}

\begin{definition}[Pseudofinite Theory]
    A complete first-order theory $T$ is \textit{pseudofinite} if $T$ is the theory of some pseudofinite structure.
\end{definition}

\subsection{The Farey Graph}

We first recall its classical construction of the Farey Graph on the set of rational numbers.

\begin{definition}[Classical Farey Graph]
    The \textit{Farey graph} $\mathcal{F}$ is the graph with vertex set $V = \mathbb{Q} \cup \{\infty\}$.
    Representing each vertex as a reduced fraction $p/q$ where $q \ge 0$ and $\gcd(p, q) = 1$ (with $\infty$ represented as $1/0$).
    Two vertices $p/q$ and $r/s$ are adjacent if and only if
    \[ |ps - qr| = 1. \]
\end{definition}

Alternatively, the Farey graph can be defined through an iterative combinatorial process starting from a single edge.

\begin{definition}[Combinatorial Construction]
    The Farey graph can be viewed as an increasing union of graphs $\mathcal{F}_0 \subset \mathcal{F}_1 \subset \mathcal{F}_2 \dots$, where:
    \begin{itemize}
        \item $\mathcal{F}_0$ consists of the single edge between $0/1$ and $1/0$.
        \item $\mathcal{F}_{n+1}$ is obtained from $\mathcal{F}_n$ by adding a new vertex for every edge $\{p/q, r/s\}$ in $\mathcal{F}_n$ present at stage $n$. The new vertex is the mediant $(p+r)/(q+s)$, which is connected to both $p/q$ and $r/s$, thereby forming a new triangle.
    \end{itemize}
    The full Farey graph is the limit of this process, $\mathcal{F} = \bigcup_{n=0}^\infty \mathcal{F}_n$.
\end{definition}

\subsection{Graph Theoretic Preliminaries}
Throughout this paper, all graphs are assumed to be \textit{simple}, meaning they contain no loops or multiple edges.
Formally, a graph $G = (V, E)$ consists of a set of vertices $V$ and a set of edges $E \subseteq [V]^2$, where $[V]^2$ denotes the set of 2-element subsets of $V$.
Equivalently, the edge relation $E$ can be viewed as a symmetric and irreflexive binary relation on $V$.

We recall the definition of vertex connectivity, which measures the structural robustness of a graph.

\begin{definition}[Vertex Connectivity]
    The \textit{vertex connectivity} of a graph $G$, denoted by $\kappa(G)$, is the minimum number of vertices whose removal results in a disconnected graph or a single vertex.
    A graph is said to be \textit{$k$-connected} if $\kappa(G) \ge k$.
\end{definition}
\begin{definition}
For a vertex $v \in V$, the \textit{neighborhood} of $v$, denoted $N[v]$, is the set $\{v\} \cup \{u \in V : (u, v) \in E\}$.
That is, $N[v]$ is the closed neighborhood of $v$.
\end{definition}

We now establish some basic combinatorial definitions required to discuss the structural properties of the Farey graph. The following definition comes from \cite{TentMohammadi2025}. 

\begin{definition}
Let $G = (V, E)$ be a graph.
A vertex $v \in V$ is \textit{removable} if it satisfies one of the following conditions:
\begin{enumerate}
\item $\deg(v) \leq 1$.
\item $\deg(v) = 2$ and the neighbors of $v$ are adjacent.
\end{enumerate}
\end{definition}

The property of being removable can be conveniently rephrased in terms of the clique size of the vertex's neighborhood.

\begin{proposition}
\label{prop:removable_equiv}
A vertex is removable if and only if its neighborhood $N[v]$ is a clique of size at most 3.
In particular, if $G$ is $K_4$-free, a vertex is removable if and only if its neighborhood is a clique.
\end{proposition}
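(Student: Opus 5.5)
The plan is to unpack both conditions in terms of $\deg(v)$, using the fact that $N[v]$ is the closed neighborhood, so $|N[v]| = \deg(v)+1$. Note that $N[v]$ is a clique exactly when every two distinct neighbors of $v$ are adjacent, since $v$ is adjacent to all of them by definition.

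For the forward direction we split according to which clause of the definition holds. If $\deg(v)\le 1$, then $N[v]$ is either $\{v\}$ or $\{v,u\}$ with $u$ adjacent to $v$. Either way it is a clique of size at most $2\le 3$. If $\deg(v)=2$ with neighbors $u,w$ adjacent, then $N[v]=\{v,u,w\}$ and all three pairs are edges, so it is a clique of size $3$.

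For the converse, suppose $N[v]$ is a clique with $|N[v]|\le 3$. Then $\deg(v)=|N[v]|-1\le 2$. If $\deg(v)\le 1$, clause 1 applies. If $\deg(v)=2$, the two neighbors lie in the clique $N[v]$ and are therefore adjacent, so clause 2 applies.

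For the ``in particular'' statement, assume $G$ is $K_4$-free. Any clique in $G$ then has at most $3$ vertices, so the size bound is automatic. Hence ``$N[v]$ is a clique'' is equivalent to ``$N[v]$ is a clique of size at most $3$'', which by the first part is equivalent to removability.

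There is no real obstacle here. The only point requiring care is that $N[v]$ includes $v$ itself, so the size bound $3$ corresponds to degree at most $2$, and the degenerate cases $\deg(v)\in\{0,1\}$, which give trivial cliques, are covered.
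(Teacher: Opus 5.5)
Your proof is correct and follows essentially the same route as the paper. Both arguments translate the two conditions using $|N[v]| = \deg(v)+1$ and the fact that $v$ is adjacent to every other vertex of $N[v]$, then use that cliques in a $K_4$-free graph have at most $3$ vertices for the ``in particular'' part; your case split on $\deg(v)\le 1$ versus $\deg(v)=2$ just spells out what the paper compresses into one line.
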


\begin{proof}
Let $v$ be a vertex.
If $v$ is removable, then $\deg(v) \le 1$ or ($\deg(v)=2$ and neighbors adjacent).
Since $v$ is adjacent to all its neighbors, $N[v]$ induces a clique of size $\deg(v)+1 \le 3$.
Conversely, if $N[v]$ is a clique of size $\le 3$, then $\deg(v) \le 2$ and neighbors are adjacent if $\deg(v)=2$ (since $N[v] \setminus \{v\}$ is a clique), so $v$ is removable.
If $G$ is $K_4$-free, any clique has size at most 3.
Thus for any vertex $v$, if $N[v]$ is a clique, $|N[v]| \le 3$ (otherwise $N[v]$ would be a clique of size at least 4, which is $K_4$).
Thus, in a $K_4$-free graph, $v$ is removable if and only if $N[v]$ is a clique.
\end{proof}

This property is particularly relevant for chordal graphs, which are known to always possess such vertices.

\begin{proposition}
\label{finiteK4freechordal}
Any finite, $K_4$-free, chordal graph has a removable vertex.
\end{proposition}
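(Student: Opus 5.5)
We will use the classical theorem of Dirac that every finite chordal graph has a simplicial vertex, i.e.\ a vertex $v$ whose closed neighborhood $N[v]$ induces a clique. Combined with Proposition~\ref{prop:removable_equiv}, this settles the statement. If $G$ is finite, chordal and $K_4$-free, take a simplicial vertex $v$. Then $N[v]$ is a clique, and since $G$ is $K_4$-free, $|N[v]|\le 3$. By Proposition~\ref{prop:removable_equiv}, $v$ is removable. The empty graph is a degenerate case that can be excluded by convention: the statement implicitly concerns nonempty graphs.

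Since the paper does not state Dirac's theorem, we will sketch its proof to keep the argument self-contained. We prove the stronger claim that every finite chordal graph $G$ is either complete or has two nonadjacent simplicial vertices. The induction is on $|V|$.

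\begin{enumerate}
\item If $G$ is complete, every vertex is simplicial and we are done.
\item Otherwise choose nonadjacent vertices $a,b$ and a minimal $a$--$b$ separator $S$. Let $A$ and $B$ be the components of $G-S$ containing $a$ and $b$.
\item The key lemma is that $S$ is a clique. Take $x,y\in S$. By minimality, both $x$ and $y$ have neighbors in $A$ and in $B$. So there are shortest $x$--$y$ paths through $A$ and through $B$. Gluing them gives a cycle of length at least $4$. By chordality this cycle has a chord, and the only possible chord is $xy$, because $A$ and $B$ are not adjacent and the paths are shortest.
\item Apply induction to $G[A\cup S]$ and $G[B\cup S]$. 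Each is either complete or has two nonadjacent simplicial vertices. Since $S$ is a clique, at most one of those two lies in $S$, so we obtain a simplicial vertex of $G[A\cup S]$ lying in $A$. If $G[A\cup S]$ is complete, any vertex of $A$ works.
\item Such a vertex has all its neighbors inside $A\cup S$, so it is simplicial in $G$. Likewise we get one in $B$, and these two vertices are nonadjacent.
\end{enumerate}

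The main obstacle is the lemma that a minimal separator in a chordal graph is a clique, in particular the argument that the glued cycle has no chord other than $xy$. This is where chordality is used. The rest is bookkeeping. Alternatively, we could simply cite Dirac (1961) or Fulkerson--Gross for the existence of a simplicial vertex, in which case the proof reduces to the first paragraph.
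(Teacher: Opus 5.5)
Your proposal is correct and follows the paper's argument: the paper also obtains a vertex with clique neighborhood (via a perfect elimination ordering, which is equivalent to Dirac's theorem) and then applies Proposition~\ref{prop:removable_equiv} together with $K_4$-freeness. Your sketch of Dirac's theorem, via the fact that minimal separators in chordal graphs are cliques, is sound and only makes the cited fact self-contained; your remark that the empty graph must be excluded is a point the paper leaves implicit.
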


\begin{proof}
Via the perfect elimination ordering, every finite chordal graph has a vertex whose neighborhood $N[v]$ is a clique.
In a $K_4$-free graph, a vertex whose neighborhood is a clique is removable by Proposition \ref{prop:removable_equiv}.
\end{proof}

Using these combinatorial foundations, we can introduce the axiomatization of the Farey graph essentially  developed by Tent and Mohammadi.\footnote{In the version of \cite{TentMohammadi2025} on the ArXiv as of March 24th 2026, the axioms presented neglect to include that every vertex has degree at least 1, which leads to a non complete theory, As the Farey graph union a single isolated point would satisfy the axioms presented but satisfy the sentence $\phi$, $\exists x\forall y: \neg E(x,y)$ which is not satisfied by the Farey graph with no additional points.}
Their result provides a first-order characterization of the Farey graph based on local behavior of triangles and the consistent existence of removable vertices.

\begin{theorem}[\cite{TentMohammadi2025}]
    Let $n\in \mathbb{N}$ be nonzero.
The theory of the Farey Graph is axiomatized by:
    \begin{enumerate}
        \item $\phi_0:=$ every edge is contained in  exactly two triangles and every vertex has degree at least 3,
        \item $\psi_{n}:=$ every substructure of size at most $n$ has a removable vertex. 
    \end{enumerate}
\end{theorem}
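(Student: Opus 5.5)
The proof has two parts: show that the Farey graph $\mathcal{F}$ satisfies $\phi_0$ and every $\psi_n$, and show that any countable model of these axioms is isomorphic to $\mathcal{F}$. Since $\mathcal{F}$ is infinite and has no finite models of the full axiom set, the Löwenheim--Skolem theorem together with the Łoś--Vaught test, applied to the countable case, then gives completeness.

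\textbf{Step 1: $\mathcal{F}$ satisfies the axioms.} For $\phi_0$, I would use the combinatorial construction. Each edge $\{p/q, r/s\}$ in $\mathcal{F}_n$ receives its mediant on exactly one side at the stage it appears. The edge $\{0/1,1/0\}$ also receives the mediant $-1/1$ by the symmetry $x\mapsto -x$ of the classical definition; one could equally use $|ps-qr|=1$ and $\mathrm{SL}_2(\mathbb{Z})$-transitivity on edges. Each edge then acquires exactly two apexes: for $0/1,1/0$ these are $\pm1$, and the solutions of $|ps-qr|=1$ adjacent to both are exactly these. Every vertex has infinite degree, so degree at least 3 holds. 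For $\psi_n$, I would show that every finite induced subgraph $H$ of $\mathcal{F}$ is chordal and $K_4$-free, and then apply Proposition~\ref{finiteK4freechordal}. $K_4$-freeness follows because the common neighbours of an edge are exactly its two apexes, and these are not adjacent. Chordality follows because an edge of $\mathcal{F}$ separates $\mathcal{F}$: the Farey tessellation is planar, with its vertices on the circle $\partial\mathbb{H}^2$, and a geodesic edge splits the circle into two arcs with no edges crossing. An induced cycle of length at least $4$ has nonadjacent vertices $u,w$. The two paths of the cycle from $u$ to $w$ lie on the two arcs cut out by the chord $uw$, so together with its other vertices the cycle bounds a polygon whose diagonals are all chords. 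Since the tessellation triangulates every such ideal polygon, some diagonal is an edge, which contradicts the cycle being induced.

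\textbf{Step 2: uniqueness of countable models.} Let $G$ be a countable model. By $\phi_0$, the link of each vertex $v$, that is, the graph its neighbours induce, is $2$-regular, since each edge $vu$ lies in exactly two triangles. I would show that this link is a single two-way infinite path, or equivalently an infinite cycle-free $2$-regular connected graph. A finite cycle in the link, or two triangles glued badly, would produce a finite substructure with no removable vertex. For instance, a wheel or any finite cycle of triangles around $v$ forms a finite set in which every vertex has degree $\ge 2$ with nonadjacent neighbours, and this contradicts $\psi_n$. Next I would prove that $G$ is connected. A point in a second component would not be excluded by the axioms directly, so here I would extract a suitable consequence of $\psi_n$ or argue by completeness through a back-and-forth on components. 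Given connectedness, build an isomorphism $\mathcal{F}\to G$ by back-and-forth along the stages $\mathcal{F}_n$. Map the initial edge to any edge of $G$. Whenever an edge has been mapped, map its two apexes to the two triangle-apexes in $G$. The hard part is injectivity and surjectivity. Injectivity means that different sequences of mediant steps never reach the same vertex of $G$. A collision would give a finite cycle of triangles enclosing a finite set with no removable vertex, contradicting $\psi_n$ for large $n$. Surjectivity follows from connectedness and the fact that each link is a path, since every neighbour of an image vertex is reached by iterating apexes around it.

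\textbf{Main obstacle.} The crux is Step 2's injectivity/tree-likeness: converting $\psi_n$ into the statement that the \emph{dual} graph of triangles in $G$ is a tree (the trivalent tree dual to the Farey tessellation). My plan is to take a minimal cycle in the dual graph. I would then look at the finite set of vertices of the triangles along it and check that each such vertex has two nonadjacent neighbours in the set, so that it is non-removable. The delicate case is the vertices at the "pivot" of the cycle, and I would handle them by choosing the cycle to minimize the number of enclosed vertices. The connectedness issue is the other subtle point, and as the footnote indicates it may require care with the degree axiom.
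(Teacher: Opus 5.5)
Your Step~1 is fine, but Step~2 asserts something false, so the {\L}o\'{s}--Vaught route cannot work. The disjoint union $\mathcal{F}\sqcup\mathcal{F}$ satisfies $\phi_0$, since edges, triangles and neighbourhoods all lie in one copy. It also satisfies every $\psi_n$: a finite substructure meets some copy in a nonempty finite induced subgraph of $\mathcal{F}$, and a removable vertex there stays removable because it has no neighbours in the other copy. So countable models need not be connected, and no consequence of the $\psi_n$ can force connectedness. ``Arguing by completeness'' would be circular. Indeed, since $\mathcal{F}$ has infinite diameter, compactness already gives disconnected countable models of $Th(\mathcal{F})$ itself.

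Connectedness would not save you either. Glue two copies of $\mathcal{F}$ at $\infty$ to get $\mathcal{F}\vee\mathcal{F}$. Every triangle lies in one copy, so $\phi_0$ holds. The glued vertex is a cut vertex, so no induced cycle of length at least $4$ and no $K_4$ can use both copies. By your Step~1, every finite induced subgraph is then chordal and $K_4$-free, and Proposition~\ref{finiteK4freechordal} gives $\psi_n$. This connected countable model has a vertex whose link is \emph{two} disjoint two-way infinite paths, and its dual graph of triangles is two trees. So your claims that every link is a single path and that the dual graph is the trivalent tree are false, and your surjectivity argument collapses with them. The axioms forbid only \emph{finite} cycles in links and in the dual graph. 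They cannot control how many $\mathbb{Z}$-chains a link has or how many pieces a model has. For uncountable $\kappa$, $\bigsqcup_{\kappa}\mathcal{F}$ and $\bigvee_{\kappa}\mathcal{F}$ are non-isomorphic models of size $\kappa$, since only the latter has a vertex of degree $\kappa$. Hence the theory is categorical in no infinite power, and {\L}o\'{s}--Vaught is unavailable altogether.

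What you call the main obstacle is actually easy. Take a closed chain $T_1,\dots,T_m$ ($m\geq 3$) of distinct triangles, consecutive ones sharing an edge. By $\phi_0$, the two shared edges of each $T_i$ are distinct, so they cover its three vertices. Hence every vertex of the chain has at least three neighbours among the chain's vertices, which violates $\psi_{3m}$. No minimality or pivot analysis is needed.

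The real work is global. You need a completeness argument that tolerates many non-isomorphic models. One option is a back-and-forth system of finite partial isomorphisms between $\omega$-saturated models, which have infinitely many components and infinitely many chains in every link. Another is an Ehrenfeucht--Fra\"{\i}ss\'{e} argument built on this local tree-likeness. The paper does not supply such a proof; it imports the theorem from Tent and Mohammadi. Its footnote notes only that the degree clause is needed to exclude isolated points. Extra components that are themselves models, as in $\mathcal{F}\sqcup\mathcal{F}$, genuinely occur and must be accommodated, not ruled out.
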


It is worth noting that the axioms $\psi_n$ form a hierarchy of increasingly strict conditions.

\begin{remark}
    Let $n\geq 1$, $\psi_{n+1}$ implies $\psi_{n}$. 
\end{remark}

We first observe that these axioms are quite restrictive for finite planar graphs.
Via the usual embedding into the disk, the Farey graph is planar. However, as the following proposition shows, no non-empty finite graph (and hence no finite substructure) can satisfy the axiomatization $\phi_0$ and $\psi_n$.
\begin{proposition}
For $n \geq 6$, no finite planar graph satisfies the axioms $\phi_0$ and $\psi_n$.
\end{proposition}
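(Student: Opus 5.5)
The plan is to combine Euler's formula with the local structure forced by $\phi_0$. Planarity will produce a vertex of small degree. The axioms $\phi_0$ and $\psi_n$ will then force a small induced wheel around that vertex. A wheel has no removable vertex, which contradicts $\psi_n$ once $n\ge 6$. Throughout, let $G$ be a finite planar graph satisfying $\phi_0$ and $\psi_n$ with $n\geq 6$. We take $G$ nonempty (the empty graph is not a model, since $\phi_0$ together with the degree condition is meant to describe a nonempty structure).

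\textbf{Step 1: $G$ is $K_4$-free.} If four vertices span a $K_4$, then the induced substructure on them has size $4\le n$. In it every vertex has degree $3$, so no vertex is removable, contradicting $\psi_n$.

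\textbf{Step 2: links are induced cycles of length at least 4.} For a vertex $v$, let $L(v)$ be the subgraph induced on its neighbours. For $u\in L(v)$, the edge $vu$ lies in exactly two triangles by $\phi_0$. Hence $u$ has exactly two neighbours inside $L(v)$, so $L(v)$ is $2$-regular, i.e.\ a disjoint union of cycles. Such a cycle cannot be a triangle, since a triangle in $L(v)$ together with $v$ would be a $K_4$, contradicting Step 1. The cycles are also induced in $G$. Indeed, a chord between two vertices $u,w$ of a cycle would be another edge inside $L(v)$, giving $u$ a third neighbour in $L(v)$ and so a third triangle on $vu$.

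\textbf{Step 3: a vertex of degree at most 5.} $G$ is a finite simple planar graph, so Euler's formula gives $|E|\le 3|V|-6$. Hence some vertex $v$ has $\deg(v)\le 5$, and $\phi_0$ gives $\deg(v)\geq 3$. By Step 2, $L(v)$ is a union of cycles, each of length at least $4$, on at most $5$ vertices. Therefore $L(v)$ is a single induced cycle $C$ of length $4$ or $5$.

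\textbf{Step 4: the wheel contradicts $\psi_n$.} Consider the induced substructure on $\{v\}\cup V(C)$. By Step 2 it is exactly the wheel $W_k$ with $k\in\{4,5\}$, and it has at most $6\le n$ vertices. In this wheel, $v$ has degree $k\ge 4$, and every rim vertex has degree $3$. So no vertex has degree $\le 1$, and no vertex has degree $2$ with adjacent neighbours. Thus the wheel has no removable vertex, contradicting $\psi_n$.

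The main point to be careful about is Step 2. We must check that $L(v)$ is induced with no chords and is not a triangle, so that the substructure in Step 4 really is the wheel with these degrees. This rests on the word ``exactly'' in $\phi_0$ together with the $K_4$-freeness from Step 1. Everything else is the standard degree bound from Euler's formula.
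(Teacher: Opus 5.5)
Your proof is correct and follows the same route as the paper: planarity gives a vertex $v$ of degree at most $5$, and its closed neighbourhood $N[v]$ is a substructure of size at most $6$ with no removable vertex, which contradicts $\psi_n$. The paper checks the last point more cheaply than your Steps 1--2: every vertex of $N[v]$ already has degree at least $3$ inside $N[v]$ (the centre because $\deg(v)\ge 3$, and every other $w$ because it is adjacent to $v$ and to the two common neighbours of $v$ and $w$ given by $\phi_0$), so neither $K_4$-freeness nor the ``exactly'' clause of $\phi_0$ is needed to identify the substructure as a wheel.
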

\begin{proof}
Let $G$ be a finite planar graph satisfying $\phi_0$. 
It is a classical graph theory result that every finite planar graph contains a vertex $v$ of degree at most $5$ and so fix $v_0\in G$ with $\deg(v_0)\leq 5$. 

Consider the substructure $N[v_0]\subseteq G$. 
From $\phi_0$, we know that every vertex is of degree at least 3 considered as an element of $G$. 
Without loss of generality let $w\neq v$ be a vertex in $N[v_0]$ and $z_1,z_2$ be the two vertices connected to both $w$ and $v$ forming triangles. 
Since $z_1,z_2$ are distinct and also contained in $N[v_0]$ then $w$ has degree at least $3$ considered as an element of $N[v_0]$ and cannot be removable.
Thus, $N[v_0]$ has cardinality $|N[v_0]|\leq 6$ and no removable elements, contradicting $\psi_6$ as desired. 
\end{proof}

Despite these limitations for planar graphs, the pseudofiniteness of the Farey graph can be established by considering graphs embedded on higher-genus surfaces.
We rely on the existence of triangulations that are sufficiently highly connected and locally planar in large neighborhoods to satisfy $\psi_n$ and $\phi_0$.

In this paper, we will show that the theory of the Farey graph, $Th(F)$, satisfies this condition by constructing a sequence of finite graphs that satisfy increasingly large finite subsets of its axiomatization.

\begin{proposition}[Existence of a Densely Embedded Triangulation]
    \label{dense embedding}
    For each $n$, there exists a simple graph $G_n$ and an orientable surface $\Sigma$ where $G_n$ is an $n$-connected triangulation of $\Sigma$ with representativity $n$. 
\end{proposition}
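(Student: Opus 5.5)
The plan is to obtain Proposition~\ref{dense embedding} directly from the construction of Archdeacon, Hartsfield, and Little \cite{Archdeacon1996}. Their main theorem states that for every integer $k$ there is a simple graph which triangulates some orientable surface, is $k$-connected, and has representativity exactly $k$. They use these graphs as non-Hamiltonian examples, but for us only their connectivity and representativity matter. So the proof should amount to checking that their statement gives, with $k = n$, exactly the three properties in Proposition~\ref{dense embedding}. These are: $G_n$ is a simple triangulation of an orientable surface $\Sigma$, $\kappa(G_n)\ge n$, and every noncontractible closed curve on $\Sigma$ meets $G_n$ in at least $n$ points, with some such curve meeting it in exactly $n$ points.

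First I will fix the conventions, since the paper has not yet defined representativity. The \emph{representativity} (face-width) of an embedding $G\hookrightarrow\Sigma$ with $\Sigma$ not the sphere is $\min |C\cap G|$, the minimum taken over noncontractible simple closed curves $C$ on $\Sigma$. For a triangulation one may take $C$ to pass only through vertices and across faces. I will check that this agrees with the convention in \cite{Archdeacon1996}, so that ``representativity $n$'' means the same thing in both places. I will also record that the surface produced there is orientable and of positive genus. Positive genus is needed for representativity to be defined at all, and it is consistent with the planar obstruction proved above.

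Second, I will recall the construction briefly so the reader can see why the bounds hold. Start from a $k$-connected orientable triangulation of large representativity, for example a suitably fine triangulated covering or grid-like triangulation of a high-genus surface. Then modify it along one noncontractible cycle so that a noncontractible curve meets exactly $k$ vertices, while keeping every vertex cut of size at least $k$. The matching upper bound comes from the basic fact that if a noncontractible curve meets the graph in exactly $r$ vertices, then deleting those vertices disconnects the graph or leaves it small. This forces $\kappa$ and the representativity to interact, and it is why the theorem can state connectivity and representativity equal to the same $k$.

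The main obstacle I expect is small $n$ and the word ``exactly''. For $n\le 2$ the hypotheses of \cite{Archdeacon1996} may be stated only for $k\ge 3$. However, any simple triangulation of a surface other than the sphere is already $3$-connected. So for $n\le 2$ I will take a simple triangulation of the torus and pinch it along a noncontractible curve through exactly $n$ vertices, keeping the graph simple. This gives representativity $n$ with $n$-connectivity trivially satisfied. If that local surgery turns out to be awkward, I will instead note that the later application only ever uses large $n$, and state the proposition for $n\ge 3$, which is covered by the cited theorem verbatim.
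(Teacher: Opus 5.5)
Your core step, citing Theorem~1.1 of \cite{Archdeacon1996} with $k=n$, is exactly the paper's entire proof. The problems are in two steps you add around it.

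\textbf{The plan for $n\le 2$ cannot succeed.} Use the notion of triangulation you are relying on: a $2$-cell embedding of a simple graph in which every face is bounded by a $3$-cycle, so every closed face is a closed disk. On a surface of positive genus, such a triangulation always has representativity at least $3$.

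To see this, push a noncontractible curve $\gamma$ so that it meets $G$ only in vertices.
\begin{itemize}
\item If $\gamma$ meets a single vertex, it lies in the closure of one face and is contractible.
\item If it meets exactly two vertices $a\neq b$, it is an arc from $a$ to $b$ through a face $F_1$ followed by an arc from $b$ to $a$ through a face $F_2$. Both faces are triangles containing $a$ and $b$, so by simplicity both contain the unique edge $ab$. Each arc is then homotopic rel endpoints to $ab$ inside a closed disk. Hence $\gamma$ is homotopic to $ab$ traversed forth and back, which is contractible.
\end{itemize}
So no surgery yields a simple triangulation of representativity exactly $1$ or $2$. Pinching a surface along a curve does not even produce a surface. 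Read with ``exactly $n$'', the statement is false for $n\le 2$. Read as ``at least $n$'', those cases are covered by any example for $k=3$. Your fallback is therefore the correct choice, not a contingency.

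\textbf{Your justification for ``exactly $k$'' rests on a false claim.} Deleting the vertices met by a noncontractible curve need not disconnect the graph or leave it small. Take the $6$-regular triangulation of the torus on $\mathbb{Z}_m\times\mathbb{Z}_m$ (the grid plus one class of diagonals, $m\geq 3$). Take a meridian through the $m$ vertices of one row, crossing the faces between consecutive vertices. Deleting that row leaves a connected annular grid on $m(m-1)$ vertices.

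The vertices on a nonseparating curve need not form a vertex cut, so nothing ties connectivity to representativity. Your sketched construction also starts from a $k$-connected triangulation of large representativity, which is what is to be proved, so it supplies no argument.

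In the representativity literature, ``$k$-representative'' is a lower bound: every noncontractible curve meets $G$ in at least $k$ points. You should not claim the citation delivers ``exactly $k$''. You also do not need it, since the only later use, Lemma~\ref{lemma:planarity}, requires just that noncontractible cycles have length at least $n$.

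The clean proof is:
\begin{itemize}
\item cite the theorem;
\item check that its surface is orientable of positive genus;
\item note that an example with parameter $k$ serves every $n\le k$.
\end{itemize}
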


\begin{proof}
    Theorem 1.1 of \cite{Archdeacon1996}.
\end{proof}

It should be noted that the above theorem 1.1 in \cite{Archdeacon1996} is actually stronger than stated in \autoref{dense embedding} and contains more conditions than we need. 
\section{Graph Embeddings}
To analyze the properties of the graphs $G_n$, we must precisely define how they are situated within their associated surfaces.
We begin with the general definition of a graph embedding.

\begin{definition}[Graph Embeddings]
    An \textit{embedding} of a graph $G$ on a surface $\Sigma$ is a map of $G$ into $\Sigma$ where vertices of $G$ are points on $\Sigma$, and edges between vertices are homeomorphisms of $[0,1]$ such that the interior of the images of any two edges are disjoint and do not intersect the images of any vertices besides the appropriate end points.
    We will refer to $G$ as both the graph and its image in $\Sigma$. 
\end{definition}

An embedding naturally divides the surface into several components.

\begin{definition}[Face]
    Given an embedding of a graph $G$ into a surface $\Sigma$, a \textit{face} $F$ of $G$ is a connected component of $\Sigma\setminus G$.
The \textit{boundary of a face}  $\partial F$ is the topological boundary of $F$ inside of $\Sigma$.  
\end{definition}

In this paper, we focus exclusively on embeddings where every face is a triangle.

\begin{definition}[Triangulation]
    A graph $G$ embedded into a surface $\Sigma$ is a \textit{triangulation} if the boundary of every face contains $3$ distinct vertices and $3$ distinct edges. 
\end{definition}

Finally, we introduce the concept of representativity, which measures the smallest number of intersections between the graph and a non-contractible cycle on the surface.

\begin{definition}[Representativity]
    Let $G$ be a graph embedded on a surface $\Sigma$.
Find a non-contractible cycle $C$ in $\Sigma$ for which $n=|C\cap G|$ is minimized.
We call $n$ the \textit{representativity} of the embedding and say that the embedding is $n$-representative.
(This is also known as the \textit{facial width} in some sources.)
\end{definition}

\section{Main Results}
We now prove that the graphs $G_n$ provided by \ref{dense embedding} satisfy the required axioms for large $n$.
Our strategy involves showing that small induced subgraphs of $G_n$ behave like planar graphs due to the high representativity of the embedding.

\begin{lemma}
\label{lemma:planarity}
Any induced subgraph $H$ of $G_n$ with $|V(H)| < n$ is planar.
\end{lemma}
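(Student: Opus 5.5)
We want to show that a small induced subgraph $H$ of $G_n$ (fewer than $n$ vertices) is planar. The idea is to exhibit an open disk in $\Sigma$ containing the image of $H$, using the embedding of $G_n$ on $\Sigma$ from Proposition~\ref{dense embedding}. A graph embedded in an open disk is embedded in the plane, so this suffices.

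First reduce to connected $H$. A disjoint union of planar graphs is planar, since the components can be drawn in disjoint regions of the plane. So it suffices to treat each component, and each component also has fewer than $n$ vertices. Assume $H$ is connected with vertex set $S$, $|S|<n$.

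Next, build a regular neighbourhood $N$ of $H$'s image. Let $N$ be the union of:
\begin{itemize}[label={}]
\item the closed faces (triangles) of $G_n$ incident to vertices of $S$, i.e. the closed star of $S$; or, more economically,
\item a thin closed regular neighbourhood of the subcomplex formed by the vertices and edges of $H$.
\end{itemize}
I would use the regular neighbourhood. It deformation retracts onto $H$, so it is a compact surface with boundary whose fundamental group is the free group $\pi_1(H)$.

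The key claim is that every closed curve in $N$ is contractible in $\Sigma$. Let $\gamma$ be a closed curve in $N$; homotope it into $H$, so it is a closed walk on $H$. A closed walk in a graph is homotopic to a product of cycles of $H$ (conjugated by paths), so it suffices to show that every cycle $C$ of $H$ is contractible in $\Sigma$. Suppose $C$ were non-contractible. Then, as a curve, it meets $G_n$ in its own vertices and edges, not in finitely many points, so it does not directly give a representativity witness. This is the main obstacle.

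To get around this, I would push $C$ slightly off itself into the faces on one side, producing a curve $C'$ freely homotopic to $C$. Then $C'$ crosses $G_n$ only at edges or vertices near $C$, and a careful push meets $G_n$ in at most $|V(C)|\le |S|<n$ points. The standard device is to route $C'$ through faces and through vertices: at each vertex $v$ of $C$, pass through $v$ itself once, travelling inside the faces around $v$ on one side. Since faces are triangles, consecutive vertices of $C$ share a triangle on each side of the edge between them, and $C'$ can go vertex to vertex through the interior of such a triangle. This meets $G_n$ only at vertices. Hence $C'$ is non-contractible with $|C'\cap G_n|<n$, contradicting $n$-representativity. Care is needed that $C'$ is a genuine closed curve in a free homotopy class equal to $C$'s. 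Since it stays in a small neighbourhood of $C$ (within the union of triangles adjacent to $C$'s edges on one side), it is homotopic to $C$.

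Finally, conclude that $N$ embeds in the plane. $N$ is a compact subsurface of the orientable surface $\Sigma$ in which every loop is null-homotopic in $\Sigma$. If $N$ had positive genus, it would contain two curves meeting transversally once; such a curve is non-separating and hence non-contractible in $\Sigma$, a contradiction. So $N$ has genus $0$ and is a sphere with holes. $N$ cannot be all of $\Sigma$, because it has boundary. Therefore $N$ is a planar surface, which embeds in $\mathbb{R}^2$, and restricting this embedding to $H\subseteq N$ shows that $H$ is planar.

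Alternatively, one could cap all boundary components but one to obtain a disk. Either way, the genus-zero step is the secondary point to check; the main obstacle remains turning a non-contractible cycle of $H$ into a curve meeting $G_n$ in fewer than $n$ points.
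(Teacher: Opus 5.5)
Your proof is correct, but it takes a different route from the paper's.

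\textbf{The paper's route.} The paper passes to the universal cover $\pi\colon\tilde\Sigma\to\Sigma$ with $\tilde\Sigma\cong\mathbb{R}^2$ and takes a component $\tilde H$ of $\pi^{-1}(H)$. It then shows that $\pi|_{\tilde H}$ is injective. Two distinct lifts of one vertex would project to a closed walk in $H$ that is nontrivial in $\pi_1(\Sigma)$, and such a walk contains a non-contractible cycle of length $<n$. Hence $H\cong\tilde H\subset\tilde G_n$ is planar.

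\textbf{Your route.} You thicken $H$ to a regular neighbourhood $N$ and show that $\pi_1(N)\to\pi_1(\Sigma)$ is trivial. The transverse-pair argument plus the classification of surfaces then shows $N$ is a sphere with holes.

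\textbf{The shared step.} Both proofs rest on the same fact: no cycle of $H$ is non-contractible in $\Sigma$. Here your proposal is more careful than the paper. The paper justifies this step by saying the representativity ``is defined as the minimum length of a non-contractible cycle in $G_n$.'' That is edge-width, not the face-width the paper actually defined. Your re-routing of each edge $v_iv_{i+1}$ through the interior of an incident triangular face supplies exactly the missing inequality, namely face-width is at most edge-width.

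\textbf{Two small fixes.} Justify the homotopy $C'\simeq C$ edge by edge, rel endpoints, inside each closed face. Each closed face is a closed disk, since the embedding is cellular with facial walks that are $3$-cycles. This replaces the appeal to $C'$ ``staying in a small neighbourhood of $C$,'' which by itself does not imply homotopy. Also, if the definition requires simple curves, choose arcs lying in a common face to be disjoint so that $C'$ is simple.

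\textbf{What each approach buys.} The covering argument needs no PL structure. It also exhibits $H$ inside a single planar triangulated graph $\tilde G_n$ whose faces are lifts of faces of $G_n$, which is the local planar picture the later chordality and $\phi_0$ arguments lean on. Your argument stays inside $\Sigma$ and avoids covering-space theory. The cost is needing regular neighbourhoods and the classification of compact surfaces; regular neighbourhoods are available because the triangulation makes $\Sigma$ a simplicial complex.
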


\begin{proof}
We employ the universal covering space of the surface $\Sigma$ to establish planarity.
Let $\tilde{\Sigma}$ be the universal cover of $\Sigma$, and let $\pi: \tilde{\Sigma} \to \Sigma$ be the associated covering map.
Since $\Sigma$ is a surface of genus $g \geq 1$, $\tilde{\Sigma}$ is homeomorphic to the plane $\mathbb{R}^2$.
Let $\tilde{G}_n = \pi^{-1}(G_n)$.
Since $\tilde{G}_n$ is embedded in $\tilde{\Sigma} \cong \mathbb{R}^2$, it is an infinite planar graph.

Consider the subgraph $H \subset G_n$.
Since the disjoint union of planar graphs is planar, we may assume $H$ is connected.
Let $v \in V(H)$ be a vertex and let $\tilde{v} \in V(\tilde{G}_n)$ be a lift of $v$ such that $\pi(\tilde{v}) = v$.
Let $\tilde{H}$ be the connected component of $\pi^{-1}(H)$ containing $\tilde{v}$.
The restriction of the projection $\pi|_{\tilde{H}}: \tilde{H} \to H$ is a locally injective graph homomorphism.
To prove that $H$ is planar, it suffices to show that $\pi|_{\tilde{H}}$ is a graph isomorphism, which implies $H$ is isomorphic to the planar subgraph $\tilde{H}$.

Suppose for the sake of contradiction that $\pi|_{\tilde{H}}$ is not injective.
Then there exist distinct vertices $\tilde{x}, \tilde{y} \in V(\tilde{H})$ such that $\pi(\tilde{x}) = \pi(\tilde{y}) = z \in V(H)$.
Since $\tilde{H}$ is connected, there exists a path $\tilde{P}$ in $\tilde{H}$ connecting $\tilde{x}$ and $\tilde{y}$.
The projection $P = \pi(\tilde{P})$ is a closed walk in $H$ rooted at $z$.
Since $\tilde{x}$ and $\tilde{y}$ are distinct lifts of the same vertex, the walk $P$ represents a non-trivial element of the fundamental group $\pi_1(\Sigma, z)$.
Thus, $P$ contains a non-contractible cycle.

The length of this non-contractible cycle is bounded by the number of vertices in $H$, so length$(P) \leq |V(H)| < n$.
However, the representativity of $G_n$ is defined as the minimum length of a non-contractible cycle in $G_n$.
By hypothesis, $G_n$ is $n$-representative.
This implies that any non-contractible cycle in $G_n$ must have length at least $n$.
This presents a contradiction ($n \leq \text{length}(P) < n$).
Therefore, no such distinct lifts $\tilde{x}, \tilde{y}$ exist.
The map $\pi|_{\tilde{H}}$ is an isomorphism, and $H$ is isomorphic to a subgraph of the planar graph $\tilde{G}_n$.
Thus, $H$ is planar.
\end{proof}

Let $G_n$ be a simple graph embedded on a surface $\Sigma$ forming an $n$-representative triangulation with vertex connectivity $\kappa(G_n) \geq n$.
The planarity of small subgraphs allows us to infer their chordality using the vertex connectivity of the host graph.

\begin{lemma}
\label{lemma:chordality}
Let $n \geq 5$. Any induced subgraph $H$ of $G_n$ with $|V(H)| < n$ is chordal.
\end{lemma}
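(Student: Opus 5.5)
We argue by contradiction: suppose $H$ is an induced subgraph of $G_n$ with $|V(H)|<n$ that is not chordal. Then $H$ contains an induced cycle $C=v_1v_2\cdots v_k v_1$ with $k\ge 4$. Since $C$ is induced in $H$ and $H$ is induced in $G_n$, the cycle $C$ is also induced in $G_n$, so no chord of $C$ is an edge of $G_n$. Moreover $k\le |V(H)|<n$, so $C$ is a cycle of $G_n$ of length less than the representativity. The plan is to use this short length to show that $C$ bounds a disk, and then to use the connectivity $\kappa(G_n)\ge n$ to show that a triangulated disk bounded by a chordless cycle must contain interior vertices that $C$ separates from the rest of the graph.

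\textbf{Step 1: $C$ is contractible and bounds a disk.} Any non-contractible closed curve in $\Sigma$ meets $G_n$ in at least $n$ points. The cycle $C$, viewed as a closed curve on $\Sigma$, can be pushed slightly off $G_n$ into the adjacent faces so that it meets $G_n$ only near its $k$ vertices. One way is to route it through the faces along one side, crossing only at the $v_i$. The resulting curve meets $G_n$ in at most $k<n$ points, so it is contractible, and hence $C$ is contractible. This matches the use of representativity in Lemma~\ref{lemma:planarity}. A contractible simple closed curve on a surface bounds an embedded disk $D\subset\Sigma$ (Epstein's theorem). I expect this step to be the main obstacle, because the paper's definition of representativity concerns curves meeting $G_n$ rather than graph cycles, and the passage from one to the other must be handled carefully. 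I would state the standard fact that in an $n$-representative embedding every cycle of length less than $n$ is contractible and bounds a disk, with the push-off argument above as its justification.

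\textbf{Step 2: the disk contains an interior vertex.} The embedding is a triangulation, so $D$ is a union of triangular faces with boundary $C$. Suppose $D$ contained no vertex of $G_n$ in its interior. Then $D$ would be a triangulated polygon on $k\ge 4$ boundary vertices. Any triangulation of a polygon with at least $4$ vertices uses a diagonal, and that diagonal is an edge of $G_n$ joining two non-consecutive vertices of $C$. This is a chord of $C$, contradicting inducedness. The argument is valid even when $G_n$ has multiple embedded copies of things, since $G_n$ is simple. Therefore the interior of $D$ contains some vertex $u$ of $G_n$.

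\textbf{Step 3: connectivity contradiction.} Every path in $G_n$ from $u$ to a vertex outside $D$ must pass through the boundary $C$, and since edges meet $C$ only at vertices, it must pass through a vertex of $C$. We also need a vertex outside the closed disk $D$. If every vertex lay in $D$, then the disk $D$ would contain all of $G_n$. Since $\Sigma$ is triangulated by $G_n$, this would force $\Sigma=D$, which is impossible for a closed surface; alternatively one can rule it out using $|V(G_n)|>n$, which follows from $n$-connectivity. Hence $V(C)$ separates $u$ from the rest of $G_n$, so $\kappa(G_n)\le k<n$. This contradicts $\kappa(G_n)\ge n$, and therefore $H$ is chordal. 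The hypothesis $n\ge 5$ is needed only so that induced cycles of length $k\ge 4$ with $k<n$ can exist at all; for $n\le 4$ the statement holds vacuously.
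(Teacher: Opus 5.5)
Your proof is correct, and its skeleton matches the paper's. An induced cycle $C$ of length $4\le k<n$ must enclose a vertex, so $V(C)$ is a vertex cut of size less than $n$, contradicting $\kappa(G_n)\ge n$.

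The difference is in how you justify the separation. The paper invokes Lemma~\ref{lemma:planarity} to say that $C$ is a Jordan curve ``in the local planar embedding''. It then simply asserts that $C$ has nontrivial interior and separates interior vertices from exterior ones. You bypass Lemma~\ref{lemma:planarity} entirely:
\begin{enumerate}
\item You use the face-width definition directly, pushing $C$ into adjacent faces so it meets $G_n$ in only $k<n$ points, to get that $C$ is null-homotopic on $\Sigma$.
\item You use Epstein's theorem to obtain an embedded disk $D$.
\item You prove that $D$ contains an interior vertex (otherwise a diagonal of the triangulated polygon would be a chord).
\item You prove that some vertex lies outside $D$.
\end{enumerate}
The paper's route is shorter and reuses an earlier lemma. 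However, the conclusion of Lemma~\ref{lemma:planarity} as stated, that $H$ is abstractly planar, says nothing by itself about how $C$ sits in $\Sigma$. A non-contractible cycle on a torus is an abstractly planar graph that does not separate the surface. So the content actually needed is exactly your Step~1. Your version also bridges the gap between face-width and lengths of graph cycles, which the paper conflates in the proof of Lemma~\ref{lemma:planarity}.

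Two small remarks. First, the alternative you offer in Step~3, via $|V(G_n)|>n$, does not by itself place a vertex outside $D$, so rely on your first argument. Spelled out, it goes as follows. If all vertices lay in $D$, then inducedness and simplicity force every edge into $D$. Any face outside $D$ would then be bounded by edges of $C$ alone, which is impossible for a triangle when $k\ge 4$. Hence $\Sigma=D$, which is absurd. Second, if representativity is taken over simple closed curves, note that your pushed-off curve is simple. A face adjacent to two edges of $C$ would have its third edge a chord of $C$, so no face receives two of the pushed arcs.
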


\begin{proof}
Suppose for the sake of contradiction that $H$ is not chordal.
Then $H$ contains an induced cycle $C$ of length $\ell \geq 4$.
By Lemma \ref{lemma:planarity}, $H$ is planar, so $C$ is a Jordan curve in the local planar embedding.
Since $G_n$ is a triangulation, an induced cycle of length $\ell \geq 4$ cannot be a face boundary.
Thus, $C$ has a non trivial interior and separates the vertices in its interior from those in its exterior.
Since $H$ is an induced subgraph, the vertices of $C$ form a vertex cut in $G_n$.
The size of this cut is $|V(C)| = \ell \leq |V(H)| < n$.
This contradicts the hypothesis that $\kappa(G_n) \geq n$, as no vertex cut of size less than $n$ can exist.
Therefore, $H$ contains no induced cycles of length greater than 3.
\end{proof}

A similar argument shows that $G_n$ is $K_4$-free.

\begin{lemma}
\label{lemma:K4_free}
If $n \geq 4$, then $G_n$ is $K_4$-free.
\end{lemma}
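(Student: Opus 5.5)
Suppose for contradiction that $G_n$ contains a $K_4$, with vertices $a,b,c,d$, and let $H$ be the subgraph induced on $\{a,b,c,d\}$. Since $|V(H)| = 4 < n$, Lemma \ref{lemma:planarity} says $H$ lifts isomorphically to $\tilde{G}_n$ in the universal cover $\tilde{\Sigma} \cong \mathbb{R}^2$. The lifted $K_4$ is then a plane graph inside the lifted triangulation.

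The plan is to exploit a basic fact about planar embeddings of $K_4$: in any such embedding, one vertex lies inside the triangle formed by the other three. Concretely, the embedding has four triangular regions bounded by the cycles $abc, abd, acd, bcd$. One of these regions is unbounded, say the exterior of $abc$, so $d$ lies strictly inside the Jordan curve $abc$. I will work in the cover, where all lifted faces are genuine triangles and the lifted map is a triangulation of the plane.

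The next step is to turn this picture into a small vertex cut of $G_n$.

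1. Because $G_n$ is $n$-connected with $n \geq 4$, $d$ has degree at least $4$ (assuming $|V(G_n)| > n$, which holds since $G_n$ triangulates a non-planar surface).
2. So $d$ has a neighbor $e \notin \{a,b,c\}$.
3. In the planar picture, $e$ lies either strictly inside the triangle $abc$ or outside it.
4. If some vertex of $\tilde{G}_n$ lies outside $abc$, then $\{a,b,c\}$ separates $d$ from it in $\tilde{G}_n$.
5. Pushing this down via the same lifting argument on small subgraphs gives a vertex cut of size $3 < n$ in $G_n$, contradicting $\kappa(G_n) \ge n$.

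This mirrors the proof of Lemma \ref{lemma:chordality}. Alternatively, if the triangle $abc$ is not a face, the cycle $abc$ has nonempty interior and exterior. Exactly as in Lemma \ref{lemma:chordality}, its three vertices then form a separating set, again contradicting $n$-connectivity for $n \ge 4$.

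It remains to rule out the case where each of the four triangles $abc, abd, acd, bcd$ is a face. Then the four faces close up into a sphere: every edge of the $K_4$ is in two of these faces, so the union of these four faces and their boundaries is a closed subsurface. Since $\Sigma$ is connected, that union is all of $\Sigma$. Then $\Sigma$ is a sphere and $G_n = K_4$, contradicting $n$-connectivity for $n \ge 4$ (and positive genus).

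The main obstacle is making rigorous the claim that a non-facial separating 3-cycle in the local planar picture really yields a vertex cut in $G_n$ itself, not merely in the cover. I would handle it as Lemma \ref{lemma:chordality} does. A contractible cycle, which is what representativity $\geq n > 3$ forces a triangle to be, bounds a disk in $\Sigma$. That disk either contains a vertex, giving the separation, or is a single face.
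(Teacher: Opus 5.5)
Your proof is correct and is essentially the paper's argument: a $K_4$ in a triangulation is either the whole graph or contains a separating triangle, i.e.\ a $3$-vertex cut contradicting $\kappa(G_n)\ge 4$, and your last two paragraphs supply the surface-level justification the paper leaves implicit (each triangle is contractible since the representativity exceeds $3$, so it bounds a disk that is either a single face or has a vertex inside and one outside, since otherwise $\Sigma$ would be a sphere; and four facial triangles close up into a sphere with $G_n=K_4$). Drop the universal-cover setup and steps 1--5: they add nothing, and Lemma~\ref{lemma:planarity} requires $4<n$, so it does not even apply when $n=4$.
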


\begin{proof}
Suppose $G_n$ contains a $K_4$ subgraph.
Since $\kappa(G_n) \geq n \geq 4$, $G_n$ has no vertex cut of size 3.
However, in any planar triangulation, every $K_4$ subgraph either constitutes the entire graph or contains a separating triangle, which is a vertex cut of size 3.
Since $|V(G_n)| > 4$, we have a contradiction.
\end{proof}

\subsection{Verification of $\psi_n$}
With these lemmas in hand, we can directly verify that $G_n$ satisfies the existence of removable vertices for small substructures.

\begin{theorem}
    \label{psi_n}
Let $n \geq 5$. Every induced subgraph $H$ of $G_n$ with $|V(H)| < n$ contains at least one removable vertex.
\end{theorem}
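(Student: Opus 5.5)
The plan is to combine the three preceding lemmas with Proposition \ref{finiteK4freechordal}. Fix $n \geq 5$ and an induced subgraph $H$ of $G_n$ with $|V(H)| < n$. Since $H$ is finite, it suffices to show that $H$ is chordal and $K_4$-free; Proposition \ref{finiteK4freechordal} then supplies a vertex $v$ whose closed neighborhood in $H$ is a clique of size at most $3$. By Proposition \ref{prop:removable_equiv}, such a $v$ is removable in $H$.

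The two hypotheses come from the lemmas in the following way.
\begin{enumerate}
\item Chordality of $H$ is exactly Lemma \ref{lemma:chordality}, which applies because $n \geq 5$ and $|V(H)| < n$.
\item $K_4$-freeness of $H$ follows from Lemma \ref{lemma:K4_free}: since $n \geq 5 \geq 4$, $G_n$ is $K_4$-free, and any subgraph of a $K_4$-free graph is $K_4$-free.
\end{enumerate}
Note that removability is measured inside $H$, i.e.\ $\deg_H(v)$ and adjacency in $H$. This is what the elimination-ordering argument gives, since we apply it to $H$ as a graph in its own right.

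Two small points need care. First, the empty subgraph has no vertex at all, so I would either assume $H$ nonempty, as substructures in the axiom $\psi_n$ are, or note the convention. Second, Proposition \ref{finiteK4freechordal} relies on the existence of a simplicial vertex in a finite chordal graph. That is standard (Dirac), and I would simply cite it as the paper does.

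I expect the only real obstacle to be making sure the hypotheses of the lemmas match, in particular that Lemma \ref{lemma:chordality} is applied to induced subgraphs. The theorem is stated for induced subgraphs, which are the substructures of a graph in the language $\{E\}$, so this is fine. Beyond that, the proof is a direct assembly of earlier results, and I would present it in three or four lines.
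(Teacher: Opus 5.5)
Your proposal is correct and takes the same route as the paper: chordality from Lemma~\ref{lemma:chordality}, $K_4$-freeness inherited from $G_n$ via Lemma~\ref{lemma:K4_free}, then Proposition~\ref{finiteK4freechordal} applied to $H$ as a graph in its own right. Your caveat that $H$ must be nonempty is a legitimate point that the paper leaves implicit.
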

\begin{proof}
By Lemma \ref{lemma:chordality}, $H$ is a chordal graph.
Additionally, by Lemma \ref{lemma:K4_free}, $H$ is $K_4$-free, so from proposition \ref{finiteK4freechordal}, $H$ contains a removable vertex. Thus, $G_n\vDash \psi_n$ as desired. 

\end{proof}
\subsection{Verification of $\phi_0$}
The second axiom requires that every edge be part of exactly two triangles.
We first show that in a triangulation, every edge is incident to at least two distinct triangular faces.

\begin{lemma}
Let $G_n$ be a simple graph embedded on a surface $\Sigma$ forming an $n$-representative triangulation, with cardinality and connectivity $|V(G)|> \kappa(G_n) \geq n$, where $n \geq 3$.
For any edge $(x, y) \in E(G_n)$, there exist at least two distinct vertices $z_1, z_2 \in V(G_n)$ such that $\{x, y, z_i\}$ forms a facial triangle for $i=1, 2$.
\end{lemma}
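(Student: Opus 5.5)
We argue locally around the edge $e=(x,y)$ in the surface, working entirely with the embedding $G_n\subset\Sigma$ and the triangulation hypothesis. Since $\Sigma$ is a surface and the interior of the arc $e$ is disjoint from the rest of $G_n$, each interior point $p$ of $e$ has a small disk neighborhood $D$ in $\Sigma$ that meets $G_n$ only in a sub-arc of $e$. That sub-arc cuts $D$ into two half-disks $D^+$ and $D^-$. Each of these is connected and disjoint from $G_n$, so each lies in a single face: call them $F_1\supseteq D^+$ and $F_2\supseteq D^-$. Both faces contain $p$ in their closure, so $e\subseteq\partial F_1\cap\partial F_2$. By the definition of triangulation, $\partial F_i$ consists of exactly three distinct vertices and three distinct edges. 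Since $e$ is one of those edges, $x,y\in\partial F_i$, and the third vertex $z_i$ is adjacent to both $x$ and $y$. Thus $\{x,y,z_i\}$ is a facial triangle for $i=1,2$.

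The main obstacle is to show that $F_1\neq F_2$, and then that $z_1\neq z_2$. For the first claim, suppose $F_1=F_2=F$. Then $F$ approaches $e$ from both sides. Take a point $a\in D^+$ and a point $b\in D^-$, join them by an arc $\gamma$ inside $F$, and close $\gamma$ up by the short segment through $p$ inside $D$. This gives a simple closed curve $C$ meeting $G_n$ in exactly one point, $p$, and crossing $e$ transversally there. If $C$ were contractible, it would bound a disk, and that disk would separate $D^+$ from $D^-$ near $e$ on the two sides. The face $F$ is an open disk in a triangulation, since its boundary is a $3$-cycle and the embedding is cellular. A closed curve passing through a face and across one of its boundary edges once meets the boundary cycle of that face an odd number of times, and this contradicts contractibility by a mod-$2$ intersection count. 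Hence $C$ is non-contractible with $|C\cap G_n|=1<n$, which contradicts $n$-representativity because $n\ge 3$. So $F_1\neq F_2$.

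For the second claim, suppose $z_1=z_2=z$. Then the two faces $F_1$ and $F_2$ both have boundary cycle $xyz$. The union of their closures is then a closed surface, namely a sphere formed from two triangles glued along their boundaries. Since $\Sigma$ is connected, this union is all of $\Sigma$. Then $G_n$ would be just the triangle on $\{x,y,z\}$, contradicting $|V(G_n)|>\kappa(G_n)\ge n\ge 3$. An alternative route is that $\{x,z\}$ or $\{y,z\}$ would then be a small vertex cut, contradicting connectivity $n$. Therefore $z_1\ne z_2$, and this gives the two required distinct facial triangles.
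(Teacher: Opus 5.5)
Your proof is correct and follows the paper's route: the half-disk picture at an interior point $p$ of $e$ produces the two faces, and $z_1=z_2$ is ruled out because two triangular faces with the same boundary would close up into all of $\Sigma$, forcing $G_n\cong K_3$ and contradicting $|V(G_n)|>3$. Your one addition, the representativity argument that $D^+$ and $D^-$ lie in different faces, fills a step the paper only asserts; there the mod-$2$ count of $C$ against the $3$-cycle $xyz$ (which $C$ meets exactly once, transversally at $p$) is what does the work, and the remarks about $F$ being an open disk and the embedding being cellular are neither needed nor given by the paper's definition of triangulation.
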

\begin{proof}
    Let $e = (x, y)$ be an arbitrary edge in $E(G_n)$, and let $p$ be a point in its interior.
Let $\mathcal{T}$ be the set of all triangular faces in the triangulation.
Since $G_n$ is finite, $\mathcal{T}$ contains finitely many triangles.
For any triangle $T \in \mathcal{T}$ such that $e$ is not on its boundary, the distance $d(p, T)$ is strictly positive.
We define $\delta = \min \{ d(p, T) : T \in \mathcal{T}, e \not\subset \partial T \}$.
Since every face is closed and the number of faces is finite, $\delta > 0$.
Consider an open neighborhood $U = B(p, \epsilon)$ with $0 < \epsilon < \delta$.
Since $\Sigma$ is a topological manifold without boundary, $U$ is homeomorphic to an open disk in $\mathbb{R}^2$.
The interior of the edge $e$ acts as a chord that divides $U$ into exactly two disjoint open regions $U_1$ and $U_2$.
Because $\epsilon < \delta$, any face of the triangulation that intersects $U$ must contain $e$ on its boundary.
Each region $U_i$ must be contained within the interior of some face; by the definition of a triangulation, these faces $F_1, F_2$ are triangles.
Thus, $e$ is incident to at least two distinct triangles $\{x, y, z_1\}$ and $\{x, y, z_2\}$.

    We now show that $z_1 \neq z_2$.
Suppose, for the sake of contradiction, that $z_1 = z_2 = z$.
This would imply that both faces $F_1$ and $F_2$ are bounded by the same set of vertices $\{x, y, z\}$.
Consequently, the edges $(x, z)$ and $(y, z)$ would also each be incident to both $F_1$ and $F_2$.
By the same local disk argument applied to $(x, z)$ and $(y, z)$, these edges cannot be incident to any other faces.
This implies that the union $F_1 \cup F_2$ is a closed sub-surface of $\Sigma$.
Since $\Sigma$ is connected, it follows that $\Sigma = F_1 \cup F_2$, which is homeomorphic to a 2-sphere, and the graph $G_n$ is the complete graph $K_3$.

However, the hypothesis states that $|V(G)|>3$, which contradicts $G_n \cong K_3$.
Thus, $z_1$ and $z_2$ must be distinct common neighbors of $x$ and $y$.
\end{proof}

We now prove that there are no additional triangles containing any given edge, thus satisfying the axiom $\phi_0$ exactly.

\begin{theorem}
    \label{phi_0}
Let $G_n$ be a simple graph embedded on a surface $\Sigma$ that is $n$-representative with vertex connectivity $\kappa(G_n) \geq n$, where $n \geq 5$.
For any edge $(x, y) \in E(G_n)$, there exist at most two distinct vertices $z_1,z_2 \in V(G_n)$ such that $\{x, y, z_i\}$ induces a triangle.
\end{theorem}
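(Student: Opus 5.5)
Suppose for contradiction that some edge $(x,y)$ lies in three triangles, with third vertices $z_1,z_2,z_3$. The plan is to show that one of the resulting $3$-cycles is a separating triangle of $G_n$. That is a vertex cut of size $3<n$, which contradicts $\kappa(G_n)\ge n$. The argument mirrors the proof of Lemma \ref{lemma:K4_free}.

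\textbf{Step 1: pass to a planar piece.} Let $H$ be the subgraph induced on $\{x,y,z_1,z_2,z_3\}$. Since $|V(H)|=5<n$ once $n\ge 6$, Lemma \ref{lemma:planarity} applies. More usefully, the argument of its proof shows that the closed walks in $H$ (in particular the triangles $xyz_i$) are contractible in $\Sigma$. The reason is that each has length $3<n$, while $n$-representativity forces every non-contractible cycle to meet $G_n$ in at least $n$ points. If $n=5$, I would instead work with each triangle separately, which only uses length $3<5$. Consequently, each triangle $T_i=xyz_i$ bounds a closed disk $D_i\subset\Sigma$. To avoid ambiguity in the choice of disk, I would argue in the universal cover $\tilde\Sigma\cong\mathbb{R}^2$. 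There I fix a lift $\tilde x\tilde y$ of the edge; each $T_i$ lifts to a genuine triangle $\tilde T_i$ through $\tilde x\tilde y$, because $T_i$ is contractible. Each $\tilde T_i$ is a Jordan curve in the plane and bounds a compact disk $\tilde D_i$.

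\textbf{Step 2: find a triangle that is not a face.} By the previous lemma, exactly two faces are incident to $e$ locally, one on each side of $\tilde e$. Hence at most two of the $\tilde T_i$ can be facial triangles. Suppose $\tilde T_3$ is not facial. Its interior disk $\tilde D_3$ then contains more than a single face, so it contains at least one vertex of $\tilde G_n$ in its interior. This is the standard planar fact that a triangle whose interior disk has no interior vertices and is triangulated must itself be a face: an interior edge would be a chord between two triangle vertices, and those are already adjacent. The exterior of $\tilde D_3$ also contains vertices, since $\tilde G_n$ is infinite.

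\textbf{Step 3: push the separation down to $\Sigma$.} The disk $\tilde D_3$ is compact and maps injectively to $\Sigma$. It is contained in a neighborhood small enough that $\pi$ is injective on it, which again uses that there are no short non-contractible cycles. So $D_3=\pi(\tilde D_3)$ is an embedded disk in $\Sigma$ bounded by $T_3$. It contains an interior vertex $w$, and $\Sigma\setminus D_3$ contains vertices of $G_n$ because $|V(G_n)|>n\ge 5$ and the disk contains only finitely many vertices. Any path in $G_n$ from $w$ to a vertex outside $D_3$ must cross $\partial D_3=T_3$, and edges meet $T_3$ only at vertices. Therefore $\{x,y,z_3\}$ is a vertex cut of size $3<n$, a contradiction.

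\textbf{Main obstacle.} The hardest step is justifying rigorously that a non-facial contractible triangle bounds an embedded disk in $\Sigma$ that contains a vertex, and that this disk misses some vertex. This is a combination of Jordan–Schoenflies in the cover with injectivity of $\pi$ on the lifted disk. I would argue the injectivity as in Lemma \ref{lemma:planarity}: if two points of $\tilde D_3$ had the same image, a short loop through them would give a non-contractible curve meeting $G_n$ fewer than $n$ times.
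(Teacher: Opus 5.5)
Your strategy is viable, and Steps 1--2 are fine: at most two of the triangles $xyz_i$ are faces, and a non-facial contractible triangle is a separating triangle, hence a $3$-cut. The gap is in Step 3, at exactly the point you flag as the main obstacle.

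Injectivity of $\pi$ on $\tilde D_3$ cannot come from a ``short loop''. Nothing bounds the size of $\tilde D_3$: a separating triangle, which is what you are trying to rule out, can enclose arbitrarily many vertices. So a path in $\tilde D_3$ joining two points with the same image may cross $\tilde G_n$ many times, and $n$-representativity gives no contradiction. Likewise, ``$\Sigma\setminus D_3$ contains vertices because $|V(G_n)|>n$ and the disk contains only finitely many vertices'' is a non sequitur, since $G_n$ is itself finite. Without these two facts the path-lifting argument yields nothing. If $\tilde D_3$ contained a lift of every vertex of $G_n$, then lifting a path that avoids $\{x,y,z_3\}$ and stays inside $\tilde D_3$ would contradict nothing.

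Both points can be repaired, but by a different idea. Since $T_3$ is a simple closed curve, for a deck transformation $g\neq 1$ the lifts $\tilde T_3$ and $g\tilde T_3$ are disjoint Jordan curves. Hence $\tilde D_3$ and $g\tilde D_3$ are either disjoint or strictly nested. Nesting is impossible, for either of two reasons:
\begin{itemize}
\item $g$ acts freely, while Brouwer's fixed-point theorem would give it a fixed point;
\item combinatorially, $g$ preserves the number of vertices in the disk, whereas strict nesting loses $\tilde x,\tilde y,\tilde z_3$.
\end{itemize}
So $\pi|_{\tilde D_3}$ is injective. This is Epstein's theorem that a null-homotopic simple closed curve bounds an embedded disk, which you already used without comment in Step 1.

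Then the two faces at $e$ lie on opposite sides of $T_3$. Their apexes are both different from $z_3$, because $T_3$ is not facial, so they give a vertex inside $D_3$ and a vertex outside it. This also makes your chord argument in Step 2 unnecessary.

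With that repair, your argument becomes essentially the paper's. The paper orders $y,z_1,z_2,z_3$ around $x$ and notes that the middle triangle $xz_2y$ has $z_1$ and $z_3$ on opposite sides. Witnesses on both sides come for free there, and one only needs that a contractible triangle separates $\Sigma$ with its two local sides in different components. The paper takes that fact from representativity without further argument, and the deck-transformation argument above supplies it for both versions.
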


\begin{proof}
Suppose for the sake of contradiction that there exists an edge $(x, y) \in E(G_n)$ and three distinct vertices $z_1, z_2, z_3 \in V(G_n)$ such that each $z_i$ is adjacent to both $x$ and $y$.
Let $H$ be the subgraph induced by the set of vertices $V_H = \{x, y, z_1, z_2, z_3\}$.
Note that $|V_H| = 5$.
Since $n \geq 5$, we have $|V_H| \leq n$.
By the assumption that $G_n$ is $n$-representative, any subgraph of size at most $n$ contains no non-contractible cycles and thus the restriction of the surface embedding of $G$ to a disk embedding containing $H$.  

Consider the planar embedding of $H$ given by the restriction of the surface embedding of $G_n$.
The edges incident to $x$ appear in a specific cyclic order.
We label the vertices $z_1, z_2, z_3$ such that they appear in the consecutive cyclic order $y, z_1, z_2, z_3$ around $x$.

Consider the cycle $C = (x, z_2, y, x)$, composed of the edges $E(x,z_2)$, $E(z_2,y)$, and $E(y,x)$.
By the Jordan Curve Theorem, $C$ partitions the plane into two disjoint open regions, $R_1$ and $R_2$.
Due to the cyclic ordering of edges around $x$ (specifically that $z_1$ and $z_3$ are separated by the edges $E(x,y)$ and $E(x,z_2)$), $z_1$ must lie in one region (say, $R_1$) and $z_3$ must lie in the other region ($R_2$).
Consequently, any path from $z_1$ to $z_3$ must intersect the boundary $C$.
Therefore, the set $S = \{x, y, z_2\}$ is a vertex cut that separates $z_1$ from $z_3$.

Since $|S| = 3$, the existence of this cut contradicts the assumption that $G_n$ is $5$-connected.
Thus, there cannot exist three distinct common neighbors of $x$ and $y$.
\end{proof}

\subsection{Pseudofiniteness}
Finally, we can combine these results to prove the main theorem of the paper.

\begin{theorem}[Pseudofiniteness of the Farey Graph]
    Every finite subset of $Th(F)$ is satisfied by a finite structure.
\end{theorem}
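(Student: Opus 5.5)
The plan is a compactness-style reduction to the axiomatization. Let $\Delta \subseteq Th(F)$ be finite. By the theorem of Tent and Mohammadi, $Th(F)$ is axiomatized by $\{\phi_0\} \cup \{\psi_m : m \geq 1\}$. Compactness then gives a finite set of axioms implying $\Delta$: since $\{\phi_0\}\cup\{\psi_m\}$ entails each sentence of $\Delta$, some finite subset does. By the Remark, $\psi_{m+1}$ implies $\psi_m$, so this finite subset can be absorbed into $\{\phi_0, \psi_N\}$ for a single $N$. It therefore suffices to produce, for each $N$, one finite graph satisfying both $\phi_0$ and $\psi_N$.

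For the construction, fix $n \geq \max(N+1, 6)$ and take $G_n$ from Proposition \ref{dense embedding}: a finite simple $n$-connected triangulation of an orientable surface $\Sigma$ with representativity $n$. I will check the hypotheses used later.
\begin{itemize}
\item Since $\kappa(G_n) \geq n$, we have $|V(G_n)| > n$ by the convention for connectivity, so $|V(G_n)| > \kappa(G_n) \geq n$.
\item The surface is not the sphere. Otherwise $G_n$ would be a planar $6$-connected graph, which is impossible because a planar graph has a vertex of degree at most $5$. In particular the covering argument of Lemma \ref{lemma:planarity} applies.
\end{itemize}

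Next I verify $\phi_0$.
\begin{itemize}
\item The edge condition follows by combining the lemma on facial triangles (at least two distinct common neighbours forming triangles) with Theorem \ref{phi_0} (at most two). So every edge lies in exactly two triangles.
\item For the degree condition, note that $n$-connectivity gives minimum degree at least $n \geq 3$, since removing the neighbourhood of a vertex disconnects it from the rest of the graph when $|V(G_n)| > \deg(v)+1$.
\end{itemize}

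For $\psi_N$, take any substructure, that is, an induced subgraph $H$ with $|V(H)| \leq N < n$. Theorem \ref{psi_n} applied with $n \geq 5$ gives a removable vertex in $H$. Hence $G_n \models \phi_0 \wedge \psi_N$, and therefore $G_n \models \Delta$. Being a finite structure, $G_n$ witnesses the claim.

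The main obstacle is bookkeeping rather than new mathematics. I need to match the index conventions: $\psi_N$ speaks of size at most $N$, while Theorem \ref{psi_n} covers sizes strictly below $n$, which is why I take $n > N$. I also need to ensure that the earlier lemmas' side hypotheses hold ($n \geq 5$, $|V(G_n)| > 3$, genus at least $1$), and that the degree part of $\phi_0$ is handled, since the earlier results only address triangles. Choosing $n$ large, as above, handles all of these at once.
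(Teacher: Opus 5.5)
Your proposal is correct and follows the same route as the paper: you take the $n$-connected, $n$-representative triangulation $G_n$ from Archdeacon--Hartsfield--Little and invoke Theorems~\ref{psi_n} and~\ref{phi_0} to get $G_n \models \phi_0 \wedge \psi_N$. Your extra bookkeeping fills in details the paper's short proof leaves implicit: the explicit compactness reduction from a finite subset of $Th(F)$ to $\{\phi_0,\psi_N\}$, taking $n>N$ to match the strict inequality in Theorem~\ref{psi_n} (the paper asserts $G_n\models\psi_n$, but that theorem only yields $\psi_{n-1}$), and checking the two-triangle lower bound, the minimum degree, $|V(G_n)|>n$, and genus at least one.
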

\begin{proof}
         Let $n>5$, and $G_n$ be an $n$-connected, $n$-representative triangulation of an orientable surface $\Sigma$.
By Theorems \ref{psi_n} and \ref{phi_0} above, $G_n$ satisfies the axioms $\psi_n$ and $\phi_0$ from the axiomatization given by Tent-Mohammadi in \cite{TentMohammadi2025}.
Since every finite subset of the axiomatization $\{\phi_0, \psi_1, \psi_2, \psi_3, \ldots\}$ is witnessed by some finite structure $G_n$, the theory $Th(F)$ is pseudofinite as desired.
\end{proof}
\section{Bibliography}

\printbibliography
\end{document}